\documentclass[letterpaper, 10 pt, conference]{ieeeconf}  

\IEEEoverridecommandlockouts                              
\usepackage{amsfonts}  
\usepackage{amsmath}
\usepackage{amssymb}
\usepackage{xcolor}

\definecolor{darkgreen}{RGB}{0,140,0}

\usepackage{amsthm}

\theoremstyle{plain}
\newtheorem{theorem}{\textbf{Theorem}}
\newtheorem{lemma}{\textbf{Lemma}}
\newtheorem{proposition}{\textbf{Proposition}}
\newtheorem{corollary}{Corollary}

\newtheorem{definition}{\textbf{Definition}}
\newtheorem{assum}{\textbf{Assumption}}

\usepackage{subcaption}
\usepackage{graphicx}

\usepackage[hidelinks]{hyperref}

\newcommand{\scal}{\mathbf{\cdot}}

\title{\LARGE \bf Input-to-Output Stability and Consensus Control of Opinion Dynamics on Graph State-Space}

\author{Arthur Doliveira$^\star$, Christophe Roman$^\star$, Guillaume Graton$^{\star\dagger}$, and Mustapha Ouladsine$^\star$
\thanks{$^{\star}$ A. Doliveira, C. Roman, G. Graton and M. Ouladsine are  with Aix-Marseille Université, Université de Toulon, CNRS, LIS (UMR 7020), Avenue Escadrille Normandie-Niemen, F-13397 Marseille Cedex 20, France
        ({\tt\small  christophe.roman@lis-lab.fr})}%
\thanks{$^{\dagger}$ G. Graton is also with Centrale Méditerranée, Technopôle de Château-Gombert, 38 rue Frédéric Joliot-Curie, F-13451 Marseille Cedex 13, France
        ({\tt\small guillaume.graton@lis-lab.fr })}%
\thanks{\textbf{Acknowledgment}: This project is supported by the French State, in the case of CASSIOPEE and ERMA project, as part of Investments for the Future Programme, now integrated into France 2030 with BPI France, and operated by ADEME.}
}

\begin{document}

\maketitle
\thispagestyle{empty}
\pagestyle{empty}

\begin{abstract}
This paper proposes a modeling of opinion dynamics in social systems with evolving structures on the Graph State-Space. It builds upon the Hegselmann–Krause (HK) model with variable influence weights, integrated into a hybrid dynamical systems framework where structural changes (agent addition/removal) are induced by inputs. We establish an Input-to-Output Stability (IOS) result under a strong connectivity assumption, using a nonsmooth Lyapunov function based on the opinion diameter; we show that the consensus expansion is then bounded by the maximal deviation of incoming agents relative to the existing convex hull. Assuming constant influence weights, we propose a consensus control result via agent injection in the social system. The results of this paper initiate, to the best of our knowledge, the analysis of resilience, stability, and control of dynamic systems on  Graph State-Space.
\end{abstract}

\section{INTRODUCTION}
\noindent Humans continuously form and adapt their opinions across a broad spectrum of topics \cite{shirzadi2025opinion}. These opinions shape their behavior and play a crucial role in numerous major societal challenges, such as global financial crises, urbanization, and climate change \cite{sirbu2016opinion}. Since the advent of online social networks, opinions have arguably never been more actively solicited, disseminated, and, in some cases, manipulated. Understanding the mechanisms of opinion diffusion in society constitutes not only a scientific challenge but also a major democratic issue, insofar as it enables the development of strategies to combat misinformation \cite{shirzadi2025opinion} as well as antisocial behaviors and opinions \cite{varma2018continuous}. Thus, opinion dynamics is a social phenomenon that attracts the interest of researchers in the social sciences, computer science, mathematics, and statistical physics. Opinion formation is a complex process shaped by individual dispositions—such as knowledge and personal judgments—as well as social interactions and external sources, including mass media \cite{shirzadi2025opinion}, \cite{sirbu2016opinion}.

One way to study opinion dynamics is through mathematical models. Despite their simplifying assumptions, these models help identify general principles and key parameters governing opinion diffusion \cite{shirzadi2025opinion}. A social system is typically represented as a graph whose vertices represent individuals carrying one or more numerical opinions, while edges represent social interactions. An evolution law then describes how the system changes over time. The literature generally distinguishes three types of evolution models \cite{shirzadi2025opinion}, \cite{sirbu2016opinion}: discrete, continuous, and hybrid. Discrete models consider opinions in a finite set, such as ${-1,+1}$, as in the Voter \cite{yildiz2013binary} and Cascade \cite{kempe2003maximizing} models. Continuous models allow opinions to vary over an infinite set, as in the French-DeGroot \cite{degroot1974reaching} and Hegselmann-Krause \cite{rainer2002opinion} models. Hybrid models combine both approaches; the CODA (Continuous Opinion and Discrete Action) model \cite{martins2008continuous} represents the degree of adherence to a discrete opinion ${-1,+1}$ through a probability $p_i$ of adopting $+1$ and $1-p_i$ of adopting $-1$.


S. McQuade et al. \cite{mcquade2019social} extended the Hegselmann--Krause model by coupling opinion dynamics with continuous influence-weight dynamics, allowing the representation of group hierarchies and the identification of influential individuals. These models assume that each individual exerts the same influence on all others. N. Ayi and N. Pouradier Duteil \cite{ayi2021mean} studied their large-population limit using mean-field and graph-limit approaches. Here, we consider one of the models introduced by McQuade et al. \cite{mcquade2019social} on an evolving social structure, where individuals and interactions may be added or removed over time. Since network structure influences opinion diffusion \cite{granovetter1973strength}, we study how opinion dynamics evolve as the underlying social structure changes.


To this end, we consider the graph state-space introduced in \cite{doliveira:hal-05047454}, which jointly captures infinitesimal variations of vertex attributes and edge weights, as well as structural changes through its algebraic structure. Modeling the latter as discrete events, we reformulate the model of S. McQuade et al. \cite{mcquade2019social} within the framework of hybrid dynamical systems \cite{goebel2012hybrid}. The resulting dynamics, including the addition and removal of individuals, are then studied through input-to-output stability.

Social equilibria in opinion dynamics—consensus, clustering, or polarization—are typically studied under bounded confidence \cite{bernardo2022finite, bernardo2024bounded}. Stability analysis relies on Lyapunov methods using disagreement energies, opinion diameter, or nonsmooth and functional formulations for switching or delayed systems \cite{amelkin2017polar, etesami2018simple, paolucci2020convergence}. In this paper, we focus on consensus under globally uniform interaction on the Graph State-Space.


Open multi-agent systems have been studied for stability via set-valued jumps on fixed Euclidean spaces \cite{franceschelli2020stability} and for average consensus limits \cite{de2022fundamental}. This work differs in three aspects. First, structural changes are modeled as algebraic operations in a variable semi-vector space. Second, we establish Input-to-Output Stability (IOS) for the nonlinear Hegselmann–Krause dynamics with co-evolving influence weights, under the strong connectivity assumption of the agents. Third, building on this IOS property, we propose a consensus control result grounded in the agent injection strategy under constant influence weights.

Section \ref{GraphStateSpace} introduces the Graph State-Space. Section \ref{HybridMdodel} formulates the HK model with dynamic weights as a hybrid system. Section \ref{NonSmoothLyapunov} establishes IOS for the co-evolving weight model via a nonsmooth Lyapunov function. Section \ref{ControlRes} designs a consensus control strategy via agent injection (constant weights). Section \ref{NumericalTest} presents numerical implementations. Section \ref{Conc} concludes.

\section{Graph State-Space}
\label{GraphStateSpace}
\noindent  Two equivalent definitions of a graph are considered: set-theoretic and vectorial. We begin with the first.  

\subsection{Set-Theoretic Definition}

\noindent Let $\mathcal{I} \subset \mathbb{N}$ be a countable set of labels.  Let $\mathbb{V} = \{(i, x_i) \mid i \in \mathcal{I}\}$ denote the set of individuals, where each individual is represented by its label $i \in \mathcal{I}$ and its opinion value $x_i \in \mathbb{R}$. The set-theoretic definition of graph is as follows: 

\begin{definition}
\label{GraphDef} 
   A  graph $G$  built on $\mathbb{V}$, consists of a pair $(V_G, E_G)$, where $V_G \subseteq \mathbb{V}$ and $E_G$ are the sets of vertices and edges of $G$, respectively, such that: 
 \vspace{-1 mm} 
\[
V_G = \{(i, x_i) \mid i \in \mathcal{I}_G, \; x_i \in \mathbb{R}\}, \]
\vspace{-4 mm}
\[E_G = \{(p, q, w_{p,q}) \mid (p, q) \in \mathcal{L}_G, \; w_{p,q} \in \mathbb{R}\},
\]

\vspace{-1 mm}

\noindent Here, $\mathcal{I}_G \subseteq \mathcal{I}$ and $\mathcal{L}_G \subseteq \mathcal{I}_G \times \mathcal{I}_G$. 
\end{definition}

An arbitrary graph $G$ on $\mathbb{V}$ represents a social system, where $w_{p,q}\in\mathbb{R}$ measures the influence of $q$ on $p$’s opinion. We associate with $G$ the adjacency matrix $A=(a_{p,q})_{(p,q)\in\mathcal{I}G^2}$, where $a{p,q}\in{0,1}$ indicates the presence of a communication channel between $p$ and $q$; we denote by $\mathbb{G}$ the set of all graphs on $\mathbb{V}$.

\begin{assum}
\label{WeightsAssum}
Let $G$ be a graph whose vertices are indexed by $\mathcal{I}_G$. As in \cite{mcquade2019social, ayi2021mean}, we assume that each individual $p \in \mathcal{I}_G$  propagates their opinion with the same weight to every other individual $q \in \mathcal{I}_G$. That is, $w_{p,q} :=w_p<$ $\forall q \in \mathcal{I}_G$.
\end{assum}

\subsection{Vectorial Definition}

 \noindent Let $\mathcal{B}=\{\mathbf{b}_i\mid i\in\mathcal{I}\}$ be a set of linearly independent vectors indexed by the individual labels in $\mathcal{I}$. For any set $\mathcal{S}$, let $2^\mathcal{S}=\{S\mid S\subseteq\mathcal{S}\}$ denote its power set. Let $\mathcal{B}_e=\mathcal{B}\otimes\mathcal{B}$ be the tensor product \cite{burdick1995introduction}. For $B\in2^\mathcal{B}$, define $I(B)=\{i\in\mathcal{I}\mid\mathbf{b}_i\in B\}$, and for $D\in2^{\mathcal{B}_e}$, define $I(D)=\{(k,l)\in\mathcal{I}^2\mid\mathbf{b}_k\otimes\mathbf{b}_l\in D\}$. The subspaces associated with $B$ and $D$ are defined as follows:

 \vspace{-4 mm}

\begin{equation}
\label{attVector}
\mathcal{V}(B)
= \operatorname{span}(B)
= \left\{
\sum_{i\in I(B)} \alpha_i\mathbf{b}_i
\;\middle|\;
\alpha_i\in\mathbb{R}
\right\}.
\end{equation}

 \vspace{-2 mm}

and

 \vspace{-7 mm}
 
\begin{equation}
\label{boolVect}
\begin{split}
\mathcal{A}(D)
&=
\left\{
\sum_{(k,l)\in I(D)}
\gamma_{k,l}\mathbf{b}_k\otimes\mathbf{b}_l
\;\middle|\;
\gamma_{k,l}\in\{0,1\}
\right\}.
\end{split}
\end{equation}

As in \cite{doliveira:hal-05047454, doliveira2026generalized}, we define the variable-basis spaces as
\begin{equation}
\mathcal{V} := \bigcup_{B \in 2^{\mathcal{B}}} \mathcal{V}(B),
\qquad
\mathcal{A} := \bigcup_{D \in 2^{\mathcal{B}_e}} \mathcal{A}(D).
\end{equation}

An element $\mathbf{x}_B \in \mathcal{V}$ and an element $\mathbf{a}_D \in \mathcal{A}$, with $B \in \mathcal{B}$ and $D \in \mathcal{B}_e$, are represented with respect to the bases $B$ and $D$, respectively, as
\begin{equation}
\mathbf{x}_B\! :=\!\!\sum_{k \in I(B)} x_k \mathbf{b}_k,
\qquad
\mathbf{a}_D\!:=\!\!\! \sum_{(p,q) \in I(D)}\!\!\! a_{p,q}
\mathbf{b}_p \otimes \mathbf{b}_q,
\end{equation}
where $x_k \in \mathbb{R}$ and $a_{p,q} \in \{0,1\}$. Let
\begin{equation}
\label{DB_Space}
    \mathcal{D}(B) := \mathcal{V}(B)^2 \times \mathcal{A}(D),
~\text{ with }~ D = B \otimes B.
\end{equation}
We define the variable-basis space $\mathcal{D}_{\mathbb{G}}
:= \bigcup_{B \in 2^{\mathcal{B}}} \mathcal{D}(B)$. An element of $\mathcal{D}_{\mathbb{G}}$ is written as $\mathbf{X}_B := (\mathbf{x}_B,\mathbf{w}_B,\mathbf{a}_D)$ with $D := B \otimes B$ for $B \in 2^{\mathcal{B}}$. Under Assumption \ref{WeightsAssum}, we consider the following proposition:

\begin{proposition}
The sets $\mathbb{G}$ and $\mathcal{D}_{\mathbb{G}}$ are bijective.
\end{proposition}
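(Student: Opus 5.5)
We construct an explicit map $\Phi:\mathbb{G}\to\mathcal{D}_{\mathbb{G}}$ and show that it is injective and surjective. Given $G=(V_G,E_G)\in\mathbb{G}$, Assumption \ref{WeightsAssum} lets us replace every edge weight $w_{p,q}$ by the single vertex weight $w_p$. We take the basis $B_G:=\{\mathbf{b}_i\mid i\in\mathcal{I}_G\}\in 2^{\mathcal{B}}$, so that $I(B_G)=\mathcal{I}_G$, and $D_G:=B_G\otimes B_G$. We then set
\[
\Phi(G):=\Big(\sum_{i\in\mathcal{I}_G}x_i\mathbf{b}_i,\ \sum_{i\in\mathcal{I}_G}w_i\mathbf{b}_i,\ \sum_{(p,q)\in\mathcal{I}_G^2}a_{p,q}\,\mathbf{b}_p\otimes\mathbf{b}_q\Big),
\]
where $a_{p,q}=1$ if $(p,q)\in\mathcal{L}_G$ and $a_{p,q}=0$ otherwise. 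The first two components lie in $\mathcal{V}(B_G)$ and the third lies in $\mathcal{A}(D_G)$, so $\Phi(G)\in\mathcal{D}(B_G)\subseteq\mathcal{D}_{\mathbb{G}}$.

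For the inverse, take $\mathbf{X}_B=(\mathbf{x}_B,\mathbf{w}_B,\mathbf{a}_D)\in\mathcal{D}_{\mathbb{G}}$. Set $\mathcal{I}_G:=I(B)$, and read off $x_i$ and $w_i$ as the coordinates of $\mathbf{x}_B$ and $\mathbf{w}_B$ on $\mathbf{b}_i$. Define $\mathcal{L}_G:=\{(p,q)\in I(B)^2\mid a_{p,q}=1\}$ and $E_G:=\{(p,q,w_p)\mid(p,q)\in\mathcal{L}_G\}$. This gives $\Psi(\mathbf{X}_B)\in\mathbb{G}$. It remains to check that $\Psi\circ\Phi$ and $\Phi\circ\Psi$ are the identities. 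Both follow from the uniqueness of coordinates with respect to the linearly independent family $\mathcal{B}$, and, for the edge component, of $\mathcal{B}\otimes\mathcal{B}$, since tensor products of linearly independent vectors are linearly independent. Under these identities the vertex set, the edge set, the opinions and the weights are all recovered exactly.

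The main obstacle is that the variable-basis union is ambiguous. An element of $\mathcal{V}$ need not determine its basis $B$: a vector with a zero coordinate $x_i=0$ belongs both to $\mathcal{V}(B)$ and to $\mathcal{V}(B\setminus\{\mathbf{b}_i\})$. In the same way, an absent edge with $a_{p,q}=0$ is not visible from the element of $\mathcal{A}$ alone. To fix this, I will treat $\mathcal{D}_{\mathbb{G}}$ as a disjoint union, so that its elements are the pairs $(B,\mathbf{X}_B)$, which matches the subscript notation $\mathbf{X}_B$ used in the paper. With this convention, the basis $B$ encodes the vertex labels, including vertices whose opinion is $0$, and injectivity is restored.

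A second subtlety concerns the edge weights of $G$. Assumption \ref{WeightsAssum} must be read as saying that every edge out of $p$ carries the weight $w_p$. Then $E_G$ is determined by $\mathcal{L}_G$ together with $\mathbf{w}_B$, so no information about the edges is lost in passing to the triple.
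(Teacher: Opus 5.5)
The paper gives no argument of its own. It cites earlier work and asserts that Assumption~\ref{WeightsAssum} changes nothing, so your explicit coordinate construction is the right thing to check. Your disjoint-union convention is a real and necessary repair, since otherwise a vertex with zero coordinates and no loop would be invisible.

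There is, however, a second loss of information that you have not handled, and it breaks $\Phi\circ\Psi=\mathrm{id}$. In Definition~\ref{GraphDef} a graph carries weights only through $E_G$, i.e.\ only on pairs in $\mathcal{L}_G$. A triple $(\mathbf{x}_B,\mathbf{w}_B,\mathbf{a}_D)$, by contrast, carries a coordinate $w_p$ for every $p\in I(B)$, independently of $\mathbf{a}_D$. For a vertex with no edge carrying its weight, $G$ does not determine $w_p$. So $\Phi$ is not well defined there, and $\Psi$ forgets $w_p$. Concretely, take $B=\{\mathbf{b}_1\}$, $\mathbf{a}_D=0\,\mathbf{b}_1\otimes\mathbf{b}_1$, $\mathbf{x}_B=x_1\mathbf{b}_1$ and $\mathbf{w}_B=c\,\mathbf{b}_1$. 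Every $c\in\mathbb{R}$ is sent by $\Psi$ to the same graph $(\{(1,x_1)\},\varnothing)$, so $\Psi$ is not injective. Your last paragraph only shows that $E_G$ is recovered from $(\mathcal{L}_G,\mathbf{w}_B)$, which is injectivity of $\Phi$. Surjectivity needs the converse, that $\mathbf{w}_B$ is recovered from $G$, and under the literal definitions it is not.

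The fix is a second convention parallel to your first. Read Assumption~\ref{WeightsAssum}, whose quantifier is $\forall q\in\mathcal{I}_G$ rather than $\forall (p,q)\in\mathcal{L}_G$, as making $w_p$ a vertex attribute. Then vertices are $(i,x_i,w_i)$, and $E_G=\{(p,q,w_p)\mid(p,q)\in\mathcal{L}_G\}$ is derived data. With this reading, both compositions are identities by exactly your coordinate argument. The other option is to keep Definition~\ref{GraphDef} and force $w_p=0$ for vertices carrying no edge. That gives a bijection onto a proper subset of $\mathcal{D}_{\mathbb{G}}$, not onto $\mathcal{D}_{\mathbb{G}}$ itself.

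A smaller point in the same spirit: $\mathcal{I}$ is only assumed countable. For infinite $\mathcal{I}_G$, $\sum_{i\in\mathcal{I}_G}x_i\mathbf{b}_i$ belongs to $\operatorname{span}(B_G)$ only if the sums in \eqref{attVector} are read as formal coordinate families. You should either restrict to finite graphs or state that reading explicitly.
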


The proof is given in Section 3 of \cite{doliveira:hal-05047454}, where Assumption~\ref{WeightsAssum} is not considered; including it does not affect the demonstration. Under Assumption~\ref{WeightsAssum}, the graph is defined in vector form as follows:

\begin{definition}
Let $B \in \mathcal{B}$. A graph $X_B \in \mathcal{D}_{\mathbb{G}}$ is defined as the triplet $X_B = (\mathbf{x}_B, \mathbf{w}_B, \mathbf{a}_D),$ where:  \emph{(i)} $\mathbf{x}_B \in \mathcal{V}$ is the vector of opinions of individuals $i \in I(B)$; \emph{(ii)} $\mathbf{w}_B \in \mathcal{V}$ is the vector of influences on the opinions of individuals $p \in I(B)$ coming from other individuals $q \in I(B)$;  \emph{(iii)} $\mathbf{a}_D \in \mathcal{A}$ is the adjacency matrix, with $D = B \otimes B$.
\end{definition}

\subsection{Algebraic Structure}
\label{AlgbraicStruct}
\noindent An algebraic structure establishes relations between the elements of a set via algebraic operations \cite{bourbaki1998algebra}. We consider three of them: two internal composition laws $+_\cup$ and $+_\cap$, and a multiplicative external law, introduced in \cite{doliveira:hal-05047454} and defined on $\mathcal{D}_{\mathbb{G}}$. Let $X_B, Y_C \in \mathcal{D}_{\mathbb{G}}$ such that $X_B = (\mathbf{x}_B, \mathbf{w}_B, \mathbf{a}_D)$ with $D = B \otimes B$ and $Y_C = (\mathbf{y}_C, \boldsymbol{\omega}_C, \mathbf{e}_L)$ with $L = C \otimes C$.  For all $\star \in \{\cup,\cap\}$, the internal laws $+_\star$ are defined as follows:
\begin{equation}
    X_B +_\star Y_C = (\mathbf{x}_B +_\star \mathbf{y}_C, \mathbf{w}_B +_\star \boldsymbol{\omega}_C, \mathbf{a}_D \diamond_\star \mathbf{e}_L),
\end{equation}
where $\diamond$ is a variant of $+$ in Boolean logic: the disjunction $\vee$ if $\star = \cup$, the conjunction $\wedge$ if $\star = \cap$. These operations satisfy the fundamental axioms of Boolean algebra \cite{halmos2009introduction}, the most relevant of which are: for all $\lambda \in \{0,1\}$
\begin{equation}
    \lambda \vee 1 = 1, \quad \lambda \vee 0 = \lambda, \quad \lambda \wedge 1 = \lambda, \quad \lambda \wedge 0 = 0. 
\end{equation}
 For $\star \in \{\cup,\cap\}$, we define the operations $+_\star$ on $\mathcal{V}$ and $\diamond_\star$ on $\mathcal{A}$, with $\diamond \in \{\vee, \wedge\}$, as in \cite{doliveira:hal-05047454}:

 \begin{definition}
 \label{realop}
The operation \(+_\star\) on \(\mathcal{V}\), with \(\star \in \{\cup, \cap\}\), is a mapping \(+_\star : \mathcal{V} \times \mathcal{V} \to \mathcal{V}\); for any pair \((\mathbf{x}_{B}, \mathbf{y}_{C}) \in \mathcal{V}^2\), where
\vspace{-2.5 mm}
\[
 \mathbf{x}_{B} = \sum_{k \in I(B)} x_k \mathbf{b}_k \quad \text{and} \quad \mathbf{y}_{C} = \sum_{l \in I(C)} y_l \mathbf{b}_l,
 \]
the operation \(+_\star\) is given by: 
\[ 
\mathbf{x}_{B} +_{\star} \mathbf{y}_{C} = \mathbf{x}_{B \star C} + \mathbf{y}_{C \star B}, \quad \text{with } \star \in \{\cup, \cap\}, 
\]
where \(\mathbf{x}_{B \star C}\) and \(\mathbf{y}_{C \star B}\) denote the projections of \(\mathbf{x}_{B}\) and \(\mathbf{y}_{C}\) onto the basis \(B \star C = C \star B\). More explicitly, \[ \mathbf{x}_{B} +_{\star} \mathbf{y}_{C} = \sum_{p \in I(B \star C)} (x_p + y_p) \mathbf{b}_p, \quad \text{with } \star \in \{\cup, \cap\}. \] If \(\star = \cup\), then for any \(p \in I(B)\) such that \(p \notin I(C)\) (and vice versa), \(y_p = 0\) (or \(x_p = 0\), respectively). 
 \end{definition}

 \begin{definition}
 \label{boolOp}
The operation \(\star \in \{\vee_\cup, \wedge_\cap\}\) on \(\mathcal{A}\) is a mapping \(\star : \mathcal{A} \times \mathcal{A} \to \mathcal{A}\); for any \((\boldsymbol{\gamma}_{D}, \boldsymbol{\delta}_{L} )\in \mathcal{A}^2\), such that  
\vspace{-2 mm}\[
\boldsymbol{\gamma}_{D} = \sum_{(i,j) \in I(D)} \gamma_{i,j} \mathbf{b}_i \otimes \mathbf{b}_j, \quad \boldsymbol{\delta}_{L} = \sum_{(k,l) \in I(L)} \delta_{k,l} \mathbf{b}_k \otimes \mathbf{b}_l,
\]  

\begin{itemize}  
  \item the disjunction $(\vee_\cup)$ is given by:  
  \[
  \boldsymbol{\gamma}_{D} \vee_\cup \boldsymbol{\delta}_{L} = \sum_{(p,q) \in I(D \cup L)} (\gamma_{p,q} \vee \delta_{p,q}) \mathbf{b}_p \otimes \mathbf{b}_q.
  \]  
When, for any \( (p,q) \in I(D) \) such that \( (p,q) \notin I(L) \) (and vice versa), \( \delta_{p,q} = 0 \) (or \( \gamma_{p,q} = 0 \), respectively).

\vspace{1 mm}

  \item the conjunction $(\wedge_\cap)$ is given by: 
  \vspace{-2 mm} 
  \[
  \boldsymbol{\gamma}_{D} \wedge_\cap \boldsymbol{\delta}_{L} = \sum_{(p,q) \in I(D \cap L)} (\gamma_{p,q} \wedge \delta_{p,q}) \mathbf{b}_p \otimes \mathbf{b}_q.
  \]  
\end{itemize} 
 \end{definition}

 Definitions \ref{realop} and \ref{boolOp} simply state that the operations $+_\star$ and $\diamond_\star$, with $\star \in \{\cup, \cap\}$ and $\diamond \in \{\vee, \wedge\}$, have a twofold action. Operations $\star$ acts on the structure, specifically on the vector bases—union generally enlarges the structure, intersection reduces it—while $+$ and $\diamond$ act on the coordinates.

\begin{definition}
The external scalar multiplication is the map $\scal:\mathbb{R}\times\mathcal{D}_{\mathbb{G}}\to\mathcal{D}_{\mathbb{G}}$, $(\alpha,X_B)\mapsto\alpha\scal X_B$, where, if $X_B=(\mathbf{x}_B,\mathbf{w}_B,\mathbf{a}_D)$ with $D=B\otimes B$, then $\alpha\scal X_B:=(\alpha\mathbf{x}_B,\alpha\mathbf{w}_B,\mathbf{a}_D)$.
\end{definition}

\begin{theorem}
\label{theocup}
The triple $(\mathcal{D}_{\mathbb{G}}, +_\cup, \scal)$ is a complete and simple semi-vector space over $\mathbb{R}$.
\end{theorem}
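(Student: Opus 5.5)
The plan is to verify the semi-vector space axioms componentwise on $X_B=(\mathbf{x}_B,\mathbf{w}_B,\mathbf{a}_D)$ and then address completeness and simplicity separately. A semi-vector space over $\mathbb{R}$ (in the sense used in \cite{doliveira:hal-05047454}) requires: $(\mathcal{D}_{\mathbb{G}},+_\cup)$ is a commutative monoid, and $\scal$ satisfies $\alpha\scal(\beta\scal X)=(\alpha\beta)\scal X$, $1\scal X=X$, and the two distributive laws, without requiring additive inverses.

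\textbf{Step 1 (monoid structure).} By Definition \ref{realop}, $\mathbf{x}_B+_\cup\mathbf{y}_C=\sum_{p\in I(B\cup C)}(x_p+y_p)\mathbf{b}_p$ with zero extension off the respective bases. Commutativity and associativity then follow from commutativity and associativity of $\cup$ on $2^{\mathcal{B}}$ together with those of $+$ on $\mathbb{R}$. The same argument applies to $\mathbf{w}$. On $\mathcal{A}$, Definition \ref{boolOp} gives the analogous reduction to $\cup$ on $2^{\mathcal{B}_e}$ and $\vee$ on $\{0,1\}$, using $\lambda\vee 0=\lambda$ for the zero extension. One must also check that $(B\otimes B)\cup(C\otimes C)\subseteq (B\cup C)\otimes(B\cup C)$. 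This lets the adjacency part of the sum be read as an element of $\mathcal{A}((B\cup C)\otimes(B\cup C))$ by padding with zeros, so the result lies in $\mathcal{D}(B\cup C)$ and the law is closed. The neutral element is the empty graph $0_\emptyset=(\mathbf{0}_\emptyset,\mathbf{0}_\emptyset,\mathbf{0}_\emptyset)$ with $B=\emptyset$, since $\emptyset\cup B=B$.

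\textbf{Step 2 (scalar axioms).} Since $\scal$ scales only $\mathbf{x}$ and $\mathbf{w}$ and leaves both $B$ and $\mathbf{a}_D$ fixed, the axioms $\alpha\scal(\beta\scal X)=(\alpha\beta)\scal X$, $1\scal X=X$ and $\alpha\scal(X+_\cup Y)=\alpha\scal X+_\cup\alpha\scal Y$ are immediate coordinatewise. The law $(\alpha+\beta)\scal X=\alpha\scal X+_\cup\beta\scal X$ requires $\mathbf{a}_D\vee\mathbf{a}_D=\mathbf{a}_D$, i.e., idempotence of $\vee$, which holds. Here the bases coincide, so no zero extension intervenes.

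\textbf{Step 3 (simple and complete).} I take \emph{simple} to mean that the only invertible element for $+_\cup$ is the neutral one, which is the sense in which the structure is a genuine semi-vector space rather than a vector space. If $X_B+_\cup Y_C=0_\emptyset$, then $B\cup C=\emptyset$, so $B=C=\emptyset$ and $X_B=0_\emptyset$. Conversely, every nonempty structure lacks an inverse, so $\mathcal{D}_{\mathbb{G}}$ is not a vector space. I take \emph{complete} to mean, following \cite{doliveira:hal-05047454}, that every element decomposes in the variable basis as a $+_\cup$-sum of elementary (single-vertex and single-edge) graphs. Writing $X_B=\sum_{+_\cup,\,k\in I(B)}(x_k\mathbf{b}_k,w_k\mathbf{b}_k,0)+_\cup\sum_{+_\cup,\,(p,q)\in I(D)}(0,0,a_{p,q}\mathbf{b}_p\otimes\mathbf{b}_q)$ and checking that this reproduces $X_B$ uses Step 1 and the zero-extension convention.

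\textbf{Main obstacle.} The delicate points are matching the exact definitions of ``complete'' and ``simple'' in \cite{doliveira:hal-05047454}, which I will adopt as above, and the closure check in Step 1 that the adjacency component of a sum lives on $(B\cup C)\otimes(B\cup C)$ rather than merely on $D\cup L$. A further concern is that the distributive law $(\alpha+\beta)\scal X$ holds only thanks to idempotence of $\vee$; this must be stated explicitly, since the same law fails for the real coordinates if one tried to scale $\mathbf{a}_D$.
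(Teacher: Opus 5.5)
Your componentwise verification is correct and is the expected route; the paper gives no proof of its own and defers to the cited reference, but it does state that \emph{complete} means only that a neutral element exists and \emph{simple} means that some element has no inverse. Under those definitions, identifying $0_\emptyset$ as the neutral element in Step 1 already gives completeness, and your argument that $X_B+_\cup Y_C=0_\emptyset$ forces $B=C=\emptyset$ gives simplicity, so the basis-decomposition property you call completeness in Step 3 is not what is required and can be dropped.
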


\begin{theorem}
\label{theocap}
The triple $(\mathcal{D}_{\mathbb{G}}, +_\cap, \scal)$ is a complete and simple semi-vector space over $\mathbb{R}$.
\end{theorem}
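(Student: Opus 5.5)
We verify the semi-vector space axioms for $(\mathcal{D}_{\mathbb{G}}, +_\cap, \scal)$ componentwise, mirroring the argument for Theorem~\ref{theocup}. Recall that a semi-vector space over $\mathbb{R}$ is a commutative monoid $(\mathcal{D}_{\mathbb{G}},+_\cap)$ with an external law satisfying the usual compatibility axioms, but without a requirement that inverses exist. \emph{Complete} will be read, as in \cite{doliveira:hal-05047454}, as closure of the operations on all of $\mathcal{D}_{\mathbb{G}}$. \emph{Simple} will be read as the only invertible element being the neutral one. Since $X_B +_\cap Y_C$ and $\alpha\scal X_B$ act coordinatewise on the triple $(\mathbf{x}_B,\mathbf{w}_B,\mathbf{a}_D)$, it suffices to treat $(\mathcal{V},+_\cap)$ and $(\mathcal{A},\wedge_\cap)$ separately. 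We must also check that the structure indices stay consistent: if $D=B\otimes B$ and $L=C\otimes C$, then $D\cap L=(B\cap C)\otimes(B\cap C)$. This holds because the tensor basis elements $\mathbf{b}_k\otimes\mathbf{b}_l$ are linearly independent. Hence $X_B+_\cap Y_C\in\mathcal{D}(B\cap C)\subset\mathcal{D}_{\mathbb{G}}$, which gives closure.

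\textbf{Step 1: monoid axioms.}
\begin{itemize}
\item \emph{Commutativity} follows from $B\cap C=C\cap B$, from $x_p+y_p=y_p+x_p$, and from the commutativity of $\wedge$.
\item \emph{Associativity} uses $(B\cap C)\cap E=B\cap(C\cap E)$. On this common basis, both bracketings give coefficients $x_p+y_p+z_p$ and $\gamma_{p,q}\wedge\delta_{p,q}\wedge\epsilon_{p,q}$.
\item \emph{Neutral element.} Intersection requires the full basis, so the candidate is $E_{\mathcal{B}}=(\mathbf{0}_{\mathcal{B}},\mathbf{0}_{\mathcal{B}},\mathbf{1}_{\mathcal{B}\otimes\mathcal{B}})$: zero opinions and weights, and all adjacency coefficients equal to $1$. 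Then $B\cap\mathcal{B}=B$, $x_p+0=x_p$, and $\gamma\wedge1=\gamma$ by the Boolean axiom. This gives $X_B+_\cap E_{\mathcal{B}}=X_B$.
\end{itemize}
Here completeness is genuinely needed: $\mathcal{B}\in2^{\mathcal{B}}$, so $E_{\mathcal{B}}\in\mathcal{D}_{\mathbb{G}}$.

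\textbf{Step 2: external law.} For $\alpha,\beta\in\mathbb{R}$ we check
\[
\alpha\scal(X_B+_\cap Y_C)=\alpha\scal X_B+_\cap\alpha\scal Y_C,\quad (\alpha\beta)\scal X_B=\alpha\scal(\beta\scal X_B),\quad 1\scal X_B=X_B.
\]
These hold because scaling does not alter the basis $B$ or the adjacency $\mathbf{a}_D$, and is linear on the real coordinates. The delicate axiom is $(\alpha+\beta)\scal X_B=\alpha\scal X_B+_\cap\beta\scal X_B$. On the real part it is immediate, since both sides live on $B\cap B=B$. On the adjacency part it reduces to $\mathbf{a}_D\wedge_\cap\mathbf{a}_D=\mathbf{a}_D$, which is idempotence of $\wedge$. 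So this axiom also holds.

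\textbf{Step 3: simplicity, the main obstacle.} Suppose $X_B+_\cap Y_C=E_{\mathcal{B}}$. The basis of the result is then $B\cap C=\mathcal{B}$, which forces $B=C=\mathcal{B}$. The adjacency parts satisfy $\gamma_{p,q}\wedge\delta_{p,q}=1$ for all $p,q$, which forces $\gamma_{p,q}=\delta_{p,q}=1$. The real parts, however, still satisfy only $x_p+y_p=0$, which does not by itself force $x=y=0$. This is exactly where care is needed. If \emph{simple} is meant in the sense used in \cite{doliveira:hal-05047454} for $+_\cup$, namely that invertibility fails outside a distinguished subspace, then the conclusion should be that the invertible elements are precisely those supported on the full basis $\mathcal{B}$ with all adjacency coefficients equal to $1$, and nothing more.

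The hardest part is therefore pinning down the notion of simplicity so that it matches Theorem~\ref{theocup}. I would first try to transport that proof verbatim via a duality. Complementing the adjacency coefficients with $\lambda\mapsto1-\lambda$ exchanges $\vee$ and $\wedge$, and complementing bases with $B\mapsto\mathcal{B}\setminus B$ exchanges $\cup$ and $\cap$. The obstruction is that this map does not carry the real coordinates over, so the real-coordinate part of the argument would have to be checked separately.
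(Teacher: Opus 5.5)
Your Steps 1 and 2 are correct. The componentwise check of the monoid and scalar axioms works, including $D\cap L=(B\cap C)\otimes(B\cap C)$, the neutral element $E_{\mathcal{B}}=(\mathbf{0}_{\mathcal{B}},\mathbf{0}_{\mathcal{B}},\mathbf{1}_{\mathcal{B}\otimes\mathcal{B}})$, and idempotence of $\wedge$ for $(\alpha+\beta)\scal X_B=\alpha\scal X_B+_\cap\beta\scal X_B$.

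The gap is Step 3: you never conclude simplicity. The reason is that you adopted a meaning of ``simple'' (only the neutral element is invertible) that is not the paper's, and under that meaning the statement is false. Your own computation shows it: any $(\mathbf{x}_{\mathcal{B}},\mathbf{w}_{\mathcal{B}},\mathbf{1}_{\mathcal{B}\otimes\mathcal{B}})$ with $\mathbf{x}_{\mathcal{B}}\neq\mathbf{0}$ has the inverse $(-\mathbf{x}_{\mathcal{B}},-\mathbf{w}_{\mathcal{B}},\mathbf{1}_{\mathcal{B}\otimes\mathcal{B}})$. So no argument can establish your version, the proposed $\vee/\wedge$, $\cup/\cap$ duality included. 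The paper states its definitions explicitly just before deferring the proof to the cited reference: a semi-vector space is \emph{complete} if it possesses a neutral element, and \emph{simple} if it contains elements without an inverse. Your reading of ``complete'' as closure is also not the paper's, but that is harmless, since Step 1 already exhibits the neutral element, which is exactly what completeness requires.

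With the paper's definition, the missing step is one line and is already implicit in your Step 3. If $X_B+_\cap Y_C=E_{\mathcal{B}}$, then $B\supseteq B\cap C=\mathcal{B}$. Hence every $X_B$ with $B\subsetneq\mathcal{B}$ has no inverse. Likewise, any element with some adjacency coefficient $a_{p,q}=0$ has none, because $0\wedge\delta_{p,q}=0\neq 1$. This existence statement, together with $E_{\mathcal{B}}$, is all that completeness and simplicity require here. The duality detour and the full characterization of the invertible elements are unnecessary.
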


A semi-vector space is said to be complete if it possesses a neutral element, and simple if it contains elements without an inverse \cite{janyvska2007semi}. The proofs of theorems \ref{theocup} and \ref{theocap} are provided in \cite{doliveira:hal-05047454}. In this work, the algebraic structure under consideration is $(\mathcal{D}_{\mathbb{G}}, +_\cup,+_\cap, \scal)$, for which theorems \ref{theocup} and \ref{theocap} hold. The space $\mathcal{D}_{\mathbb{G}}$, endowed with the distance defined in \cite[Subsection 3.3]{doliveira:hal-05047454}, is a metric space.

\section{Social model in hybrid dynamics}
\label{HybridMdodel}
\noindent The evolution of a social system in the space $\mathcal{D}_{\mathbb{G}}$, whose behavior is both continuous — through opinions and influences — and discrete — through the structure, or even through opinions and influences — can be studied within the framework of hybrid dynamical systems theory. To formulate the model in this framework, we adopt the formalism of Goebel et al. \cite{goebel2012hybrid}. We begin by introducing the necessary elements for its construction.

In the hybrid systems framework of Goebel et al. \cite{goebel2012hybrid}, the state is parameterized by continuous time \(t\in\mathbb{R}_{\geq0}\) and jump index \(k\in\mathbb{N}\), with hybrid time domain \(\operatorname{dom}(X)\subset\mathbb{R}_{\geq0}\times\mathbb{N}\) (see \cite{doliveira:hal-05047454,goebel2012hybrid}). Following \cite{doliveira:hal-05047454}, the social-system state at \((t,k)\in\operatorname{dom}(X)\) is \(X_{B_k}(t,k):=\big(\mathbf{x}_{B_k}(t,k),\mathbf{w}_{B_k}(t,k),\mathbf{a}_{D_k}(t,k)\big)\in\mathcal{D}_{\mathbb{G}}\), where \(B_k\subseteq\mathcal{B}\) and \(D_k=B_k\otimes B_k\); for simplicity, \(X_{B_k}(t):=X_{B_k}(t,k)\). The perturbation is an essentially bounded map \(U_C:\mathbb{R}_{\geq0}\to\mathcal{D}_{\mathbb{G}}\), given by \(U_{C_t}(t):=\big(\mathbf{u}_{C_t}(t),\boldsymbol{\omega}_{C_t}(t),\mathbf{e}_{L_t}(t)\big)\), where \(C:\mathbb{R}_{\geq0}\to2^{\mathcal{B}}\), \(C_t=C(t)\), and \(L_t=C_t\times C_t\). We assume that the perturbation is known at each time and affects \(\operatorname{dom}(X)\) \cite{heemels2026solution}. The evolution model is given by:
 \begin{equation} \small
   \label{GenEquaDiff}
    \!\!\! \left\{
         \begin{aligned}
           &\dot{X}_{B_k}\in f\left(X_{B_k}\right), \qquad ~~~~~X_{B_k}  \in \! F,   \\
           &X_{B_{k+1}}^+ = j\left(X_{B_k}^-,U_{C_t}^-\right),\quad  (X_{B_k}^-,  U_{C_{t}}^-) \!\in\! J ,
        \end{aligned}
      \right.
\end{equation}
where \(F\subset\mathcal{D}_{\mathbb{G}}\) and \(f:\mathcal{D}(B)\to2^{\mathcal{D}(B)}\), with \(\mathcal{D}(B)\subset\mathcal{D}_{\mathbb{G}}\) for all \(B\subset\mathcal{B}\), are the flow set and map, respectively, while \(J\subset\mathcal{D}_{\mathbb{G}}\times\mathcal{D}_{\mathbb{G}}\) and \(j:\mathcal{D}_{\mathbb{G}}\times\mathcal{D}_{\mathbb{G}}\to\mathcal{D}_{\mathbb{G}}\) are the jump set and map. \(X_{B_k}^{-}\) and \(U_{C_t}^{-}\) denote the left limits at time \(t\), where \((t,k),(t,k+1)\in\operatorname{dom}(X)\), and \(X_{B_{k+1}}^{+}\) the corresponding right limit. The hybrid system is denoted by \(\mathcal{H}:=\mathcal{H}(F,f,J,j)\). The flow map \(f\), which governs the continuous dynamics, is the set-valued map \(f=\mathcal F[f_o,f_i,f_a]\), where \(\mathcal F\) is the operator defined in \cite{hajek1979discontinuous}, allowing us to consider Filippov solutions as in \cite[Def. 1 and Prop. 1]{mcquade2019social}. For all \(B\subseteq\mathcal B\), \(f_o,f_i:\mathcal D(B)\to\mathcal V(B)\) and \(f_a:\mathcal D(B)\to\{\mathbf 0_D\}\subset \mathcal A(D)\), where \(D=B\otimes B\). Following Model 4 in \cite{mcquade2019social}, the dynamics for an agent \(p\in I(B)\) are given by:
\begin{equation}
\label{continuousDynamic}
    \dot{x}_p = f_o^p(X_B), \quad \dot{w}_p = f_i^p(X_B), \quad \dot a_{p,q}=f_a^{p,q}(X_B)
\end{equation}
with
\begin{equation}\label{dynamique}
 \!\!\!   \begin{cases} 
\dot x_p  = \frac{1}{W}\displaystyle\sum_{q\in I(B)} w_q \sigma_{p,q}(x_q-x_p),\\[1em]
\dot w_p= w_p\!\left(\frac{1}{W}\!\! \displaystyle\sum_{q\in I(B)} \!\! w_q\frac{\beta_{q,p}}{\frac{1}{W}\!\!\sum\limits_{s\in I(B)}\!\! w_s \beta_{q,s}}-1\right) \!,\\
\dot a_{p,q}=0,
\end{cases}
\end{equation}
where $W = \sum_{l\in I(B)} w_l$ is the total mass, $\sigma_{p,q} = a_{p,q}\sigma(|x_p - x_q|)$, with $\sigma(\cdot)$ denoting the interaction function. The term $a_{p,q}$ of the adjacency matrix indicates the presence of a communication channel between individuals $p$ and $q$. The term $\beta_{q,p} = w_p\sigma_{q,p}|x_p - x_q|$ represents the influence of individual $p$ on $q$. As the model naturally exhibits discontinuities in the weight dynamics at consensus, solutions between jumps are considered in the Filippov sense \cite[Def.~1 and Prop. 1]{mcquade2019social}, thereby yielding the differential inclusion in \eqref{GenEquaDiff}.

The jump map $j$ governs the discrete dynamics affecting the opinions, the influence weights, and the system structure. These structural changes, induced by the perturbation $U_{C}$, result in the addition or removal of individuals along with their connections. Assuming that $J = J_{\nearrow} \cup J_{\searrow}$, the jump map $j$ can be expressed, as in \cite{doliveira:hal-05047454}, as follows:
\begin{equation}
\label{Defjump}
j(X_{B_k}^-,U_{C_t}^-) \!=\!
   \begin{cases}
      X_{B_k}^- \!+_\cup\! U_{C_t}^-, & \text{if } (X_{B_k}^-,U_{C_t}^-)\! \in\! J_{\nearrow},\\
      X_{B_k}^- \!+_\cap\! U_{C_t}^-, & \text{if } (X_{B_k}^-,U_{C_t}^-)\! \in\! J_{\searrow},
  \end{cases}
\end{equation}

\vspace{-1 mm}

where,
\begin{equation} \small
\label{jumpset}
\begin{aligned}
J_{\nearrow} &= \left\{(X_{B},Y_{C}) \in \mathcal{D}_\mathbb{G}^2 \mid C\neq \varnothing, \exists H \subseteq C \text{ s.t. } H \not\subset B\right\}, \\
J_{\searrow} &= \left\{(X_{B},Y_{C}) \in \mathcal{D}_\mathbb{G}^2 \mid C\neq \varnothing \text{ s.t. } C \subset B\right\}.
\end{aligned}
\end{equation}
The operations $+_\cup$ and $+_\cap$, defined in Section \ref{AlgbraicStruct} (see also \cite{doliveira:hal-05047454}), respectively allow the addition of new individuals to the social system and the removal of others, specifying in the input $U_{C_t}$ which ones should be retained. 

\section{Resilience Analysis via Nonsmooth Lyapunov Functions}
\label{NonSmoothLyapunov}
\noindent The social system, subject to abrupt perturbations such as the addition or removal of individuals, is analyzed in terms of resilience, particularly with respect to consensus.  We propose to analyze the system using a nonsmooth Lyapunov function.

\subsection{Non Smooth Lyapunov Function}
\noindent Consider the following Lyapunov function, from \cite{amelkin2017polar}:
\begin{align}\label{def_V}
    V(X_B)\!=\!\max_{p,q\in I(B)}|x_p-x_q|\!=\!\max_{p\in I(B)}x_p-\min_{q\in I(B)}x_q.
\end{align}
In a fixed basis space $\mathcal{D}(B) \subset F$, for all $B \in 2^{\mathcal{B}}$ (see \eqref{DB_Space}), the function $V$ is convex, hence locally Lipschitz and possessing a generalized gradient. For the stability analysis of nonsmooth Lyapunov candidates, we employ tools from nonsmooth analysis \cite{amelkin2017polar, clarke1990optimization, cortes2008discontinuous}. Considering \eqref{def_V}, for a given state $X_{B_k}(t)$, we define the sets of indices achieving the extrema as:
\begin{align}
 I_\text{M}(X_{B_k}(t)) &= \{ p \mid x_p(t,k) = \max_{q \in I(B_k)} x_q(t,k) \},\\
I_\text{m}(X_{B_k}(t)) &= \{ p \mid x_p(t,k) = \min_{q \in I(B_k)} x_q(t,k) \}.   
\end{align}
 The Clarke generalized gradient of $V(X_{B})$ is the convex hull of the gradients of the active smooth functions: 
    \begin{align}
     \partial V(X_B) \!=\! \text{co} \{ \mathbf{b}_p - \mathbf{b}_q \mid p \in I_\text{M}(X_B), q \in I_\text{m}(X_B) \},   
    \end{align}
    where $\mathbf{b}_k$ denotes the k-th basis vector of the subset $B \subset \mathcal{B}$. Any element $\zeta \in \partial V(X_B)$ can be expressed as a convex combination:
    \begin{align}
     \zeta = \!\!\sum_{p \in I_{\text{M}}, q \in I_{\text{m}}}\!\! \mu_{p,q} (\mathbf{b}_p - \mathbf{b}_q),  \text{ with } \sum \mu_{p,q} = 1, ~ \mu_{p,q} \geq 0.  \notag  
    \end{align}
    The set-valued Lie derivative is then given by the set:
    \begin{align}
     \mathcal{L}_{f_o} V(X_B) = \left\{ \langle\zeta,f_o(X_B)\rangle  \mid \zeta \in \partial V(X_B) \right\},  \notag
    \end{align}
    where $f_o$ governs the opinion dynamics (see \eqref{continuousDynamic}).
\begin{definition}
\label{expansionmagnitude}
Let $X_B,U_C\in\mathcal D_{\mathbb G}$, with $\overline{x}=\max_{p\in I(B)}x_p$, $\underline{x}=\min_{p\in I(B)}x_p$, $\overline{u}=\max_{p\in I(C)}u_p$, and $\underline{u}=\min_{p\in I(C)}u_p$. We define the convex hulls $H_X=[\underline{x},\overline{x}]$ and $H_U=[\underline{u},\overline{u}]$. We denote by $H_X+H_U$ the Minkowski sum of $H_X$ and $H_U$, defined by $H_X+H_U=\{x+u\mid x\in H_X,\ u\in H_U\} =[\underline{x}+\underline{u},\overline{x}+\overline{u}]$. We then define the maximum possible expansion of the opinion diameter induced by the perturbation $U_C$ as
\[
\Gamma_{X_B}(U_C)=\operatorname{diam}\bigl(H_X\cup H_U\cup(H_X+H_U)\bigr)-\operatorname{diam}(H_X),
\]
\vspace{-4 mm}
\[
\begin{split}
    \text{where }~\operatorname{diam}&\bigl(H_X\cup H_U\cup(H_X+H_U)\bigr)=\\
    &\max\{\overline{x},\overline{u},\overline{x}+\overline{u}\}-\min\{\underline{x},\underline{u},\underline{x}+\underline{u}\},
\end{split}
\]
and $\operatorname{diam}(H_X)=\overline{x}-\underline{x}$. If $I(B)\cap I(C) = \varnothing$, the Minkowski sum $H_X +H_U$ is empty.
\end{definition}
\begin{theorem}\label{ISS}
Consider the opinion dynamics $\mathcal{H}$ given by \eqref{GenEquaDiff}, which is strongly connected, \textit{i.e.}, $\alpha = \min_{p,s,q}\left\{w_q\sigma_{p,q},w_q\sigma_{s,q}\right\}>0$. Then, it holds that
\vspace{-4 mm}
\[
\max \mathcal{L}_{f_o} V(X_{B_k}) \leq -\eta V(X_{B_k}), ~~\forall X_{B_k}\in F;
\]
\[
V\bigl(X_{B_{k+1}}^+\bigr)-V\bigl(X_{B_k}^-\bigr)
\leq \Gamma_{X_{B_k}}\bigl(U_{C_t}^-\bigr), ~~\forall \bigl(X_{B_k}^-,U_{C_t}^-\bigr)\in J,
\]
with \(\Gamma_{X_{B_k}}\bigl(U_{C_t}^-\bigr)\) as defined in Definition~\ref{expansionmagnitude}.
\end{theorem}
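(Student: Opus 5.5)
The proof splits into two independent parts, matching the two inequalities: a flow estimate via the Clarke gradient of $V$, and a jump estimate via the algebraic operations $+_\cup$ and $+_\cap$ of Section~\ref{AlgbraicStruct}.

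\emph{Flow part.} Fix $X_{B_k}\in F$ and write $B=B_k$. Take any $\zeta\in\partial V(X_B)$, written as $\zeta=\sum\mu_{p,q}(\mathbf b_p-\mathbf b_q)$ with $p\in I_{\text M}$, $q\in I_{\text m}$. Then
\[
\langle\zeta,f_o(X_B)\rangle=\sum\mu_{p,q}\bigl(\dot x_p-\dot x_q\bigr).
\]
It therefore suffices to bound $\dot x_p-\dot x_s$ for a maximizer $p$ and a minimizer $s$, uniformly in the pair. Using \eqref{dynamique},
\[
\dot x_p-\dot x_s=\frac1W\sum_{q\in I(B)}\Bigl(w_q\sigma_{p,q}(x_q-x_p)+w_q\sigma_{s,q}(x_s-x_q)\Bigr)\Big|_{\text{sign adjusted}}.
\]
Since $x_q-x_p\le 0$ and $x_q-x_s\ge 0$ for every $q$, each coefficient can be replaced by the lower bound $\alpha$, which gives
\[
\dot x_p-\dot x_s\le-\frac{\alpha}{W}\sum_{q}\bigl((x_p-x_q)+(x_q-x_s)\bigr)=-\frac{\alpha\,|I(B)|}{W}V(X_B).
\]
I would set $\eta=\alpha|I(B)|/W$, or a uniform lower bound on it. Because $\sum\mu_{p,q}=1$, the bound passes to every element of $\mathcal L_{f_o}V$, and hence to its maximum. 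Two technicalities need care. First, the Filippov regularization $\mathcal F$ only affects the weight equation; $f_o$ is continuous where $\sigma$ is, so the Lie derivative is taken with the single-valued $f_o$, otherwise convexifying in the same way. Second, $\eta$ must be positive and uniform. Since $W$ evolves, I would note that $\sum_p\dot w_p=0$: by construction the sum $\frac1W\sum_p w_p\beta_{q,p}/(\frac1W\sum_s w_s\beta_{q,s})$ equals $1$, weighted by $w_q/W$, so $W$ is constant along flows and $\eta$ is constant on each flow interval.

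\emph{Jump part.} I would split according to \eqref{Defjump}.

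In the case $J_\searrow$, where $C\subset B$, the basis becomes $B\cap C=C$ and the coordinates become $x_p+u_p$ for $p\in I(C)$. Each new opinion lies in $H_X+H_U$. In particular, for removal inputs with $\mathbf u_C=0$, the new opinions form a subset of the old ones and $V$ does not increase.

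In the case $J_\nearrow$, every new coordinate is one of three kinds: $x_p$ for $p\in I(B)\setminus I(C)$, which lies in $H_X$; $u_p$ for $p\in I(C)\setminus I(B)$, which lies in $H_U$; or $x_p+u_p$ on the overlap, which lies in $H_X+H_U$.

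In either case the new opinion vector lies in $H_X\cup H_U\cup(H_X+H_U)$. Its diameter is therefore at most the diameter given in Definition~\ref{expansionmagnitude}, and subtracting $V(X^-_{B_k})=\operatorname{diam}(H_X)$ yields the bound by $\Gamma$. When the overlap is empty, the Minkowski term is absent, consistent with the convention in the definition.

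The main obstacle I expect is the flow part's uniformity of $\eta$ and the precise meaning of $\alpha$. The minimum must range over all $q$ and over the extremal $p,s$. The constant $W$ must be shown invariant under the Filippov weight dynamics, or else replaced by a uniform bound. The number of agents $|I(B_k)|$ also changes across jumps, so $\eta$ may need to depend on $k$ or to be bounded below by $\alpha\min_k|I(B_k)|/\sup W$. I would first attempt the conservation $\dot W=0$ to fix this.
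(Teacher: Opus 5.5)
Your proposal is correct and follows the paper's proof essentially step for step. The flow bound is obtained exactly as in Lemma~\ref{lem_flow}: you bound $\dot x_p-\dot x_s\le-\frac{\alpha N}{W}V(X_{B_k})$ for each extremal pair and pass to convex combinations, so $\eta=\alpha N/W$. The jump bound is exactly Lemma~\ref{lem_jump}'s observation that every post-jump opinion lies in $H_X\cup H_U\cup(H_X+H_U)$. Your explicit treatment of the $+_\cap$ branch and your remarks on $\sum_p\dot w_p=0$ and on the uniformity of $\eta$ go slightly beyond the paper, which writes out only the $+_\cup$ case and takes $\eta$ pointwise.
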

\begin{lemma}\label{lem_flow}
    Consider the continuous dynamics of the system $\mathcal{H}$ (see \eqref{GenEquaDiff} and \eqref{dynamique}) within  $\Gamma \subset \mathcal{D}(B) \cap F$, i.e., between jumps. 
If $\min_{p,s,q} \{w_q \sigma_{p,q}, w_q \sigma_{s,q}\} > 0$, then it holds that:
  \begin{align}
       \max \mathcal{L}_{f_o} V(X_{B_k})\leq -\eta V(X_{B_k}).
  \end{align}
\end{lemma}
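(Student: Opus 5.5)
We compute $\langle\zeta,f_o(X_B)\rangle$ directly for an arbitrary element $\zeta\in\partial V(X_B)$ and bound it by a negative multiple of $V$. Write
\[
\zeta=\sum_{p\in I_{\text M},\,q\in I_{\text m}}\mu_{p,q}(\mathbf b_p-\mathbf b_q),\qquad \sum\mu_{p,q}=1,\ \mu_{p,q}\ge 0 .
\]
Then
\[
\langle\zeta,f_o(X_B)\rangle=\sum\mu_{p,q}\bigl(f_o^p(X_B)-f_o^q(X_B)\bigr).
\]
Since this is a convex combination, it suffices to show that
\[
f_o^p(X_B)-f_o^q(X_B)\le -\eta V(X_B)
\]
for every pair with $p\in I_{\text M}$ and $q\in I_{\text m}$, with $\eta$ independent of the pair. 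The bound then passes to the maximum over $\zeta$. The Filippov regularization only concerns the weight component, so $f_o$ stays single-valued and continuous here.

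\emph{Step 1: bound at the extremal agents.} For $p\in I_{\text M}$ set $\overline x=x_p$. Every term $w_s\sigma_{p,s}(x_s-\overline x)$ in $f_o^p$ is nonpositive, since $w_s\ge 0$, $\sigma\ge0$ and $x_s\le\overline x$. Symmetrically, for $q\in I_{\text m}$ with $\underline x=x_q$, every term $w_s\sigma_{q,s}(x_s-\underline x)$ is nonnegative. Hence
\[
f_o^p-f_o^q=\frac1W\sum_{s\in I(B)}\Bigl[w_s\sigma_{p,s}(x_s-\overline x)-w_s\sigma_{q,s}(x_s-\underline x)\Bigr].
\]

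\emph{Step 2: use strong connectivity.} This is the key step. By hypothesis $w_s\sigma_{p,s}\ge\alpha$ and $w_s\sigma_{q,s}\ge\alpha$ for all $s$, and the factors $(x_s-\overline x)\le0$ and $(x_s-\underline x)\ge0$ have fixed signs. Each bracket is therefore at most
\[
-\alpha\bigl[(\overline x-x_s)+(x_s-\underline x)\bigr]=-\alpha V(X_B).
\]
Summing over $s$ gives
\[
f_o^p-f_o^q\le-\frac{\alpha\,|I(B)|}{W}V(X_B).
\]
We therefore take $\eta=\alpha|I(B)|/W$, or $\eta=\alpha/\max_s w_s$, which lies below it because $W\le|I(B)|\max_s w_s$. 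Note that the total mass $W$ is preserved by the weight dynamics of Model~4 in \cite{mcquade2019social}: summing $\dot w_p$ gives $\sum_p\dot w_p=W-W=0$. So $\eta$ is constant along a flow interval in $\mathcal D(B)\cap F$, and the bound holds for all $X_{B_k}$ in the flow set.

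\emph{Main obstacle: uniformity of $\eta$.} The pointwise inequality is elementary; the delicate part is that $\eta$ does not degenerate. The constant $\alpha$ depends on the state through $w_q$ and $\sigma(|x_p-x_q|)$, so the argument must rest on the standing hypothesis that $\alpha>0$ holds uniformly on the region considered. One way to secure this is to use that $V$ is nonincreasing, so the opinions stay in the initial hull, together with a positive lower bound on $\sigma$ over that bounded range. Another is to use the positivity of the weights, which is preserved since $\dot w_p$ is proportional to $w_p$. I would first try to state $\eta$ explicitly as $\alpha|I(B)|/W$ with $W$ conserved, and then invoke the hypothesis of the lemma for the lower bound $\alpha$.
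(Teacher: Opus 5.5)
Your proof is correct and follows the paper's argument step for step: you reduce to extremal pairs $(p,q)\in I_{\text M}\times I_{\text m}$ via the convex combination, use the fixed signs of $x_s-\overline x$ and $x_s-\underline x$ together with $w_s\sigma_{\cdot,s}\ge\alpha$ to bound each summand by $-\alpha V$, and sum to obtain $\eta=\alpha|I(B)|/W$, which is the paper's $\lambda=\alpha N/W$. Your extra remarks on conservation of $W$ and on uniformity of $\eta$ go beyond what the paper's proof needs. Note, though, that the weights staying positive does not by itself keep them bounded away from zero, so only the standing hypothesis $\alpha>0$ secures the lower bound.
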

\begin{proof}   To establish exponential stability, it suffices to show that for each pair of critical indices $(p, s)$, the dynamics of their distance is bounded above by a negative multiple of the maximal distance. We have:
    \begin{align}
        \langle \xi,f_o(X_{B_k})\rangle = \sum_{p,s} \mu_{ps} (\dot{x}_p - \dot{x}_s),
    \end{align}
    \begin{align}
    \dot{x}_p \!-\! \dot{x}_s \!=\! - \frac{1}{W}\bigg[\!\sum_q w_q \sigma_{p,q}(x_p \!-\! x_q) \!+\!\sum_q w_q \sigma_{s,q}(x_q \!-\! x_s) \bigg]. \notag   
    \end{align}
    We can bound this expression as follows:
    \begin{align}
     \dot{x}_p - \dot{x}_s \leq -\frac{\alpha}{W} \sum_q \Big[ (x_p - x_q) + (x_q - x_s) \Big],   \\
    \dot{x}_p - \dot{x}_s \leq -\frac{\alpha}{W} \sum_q V(X_{B_k}) = -\frac{\alpha N}{W} V(X_{B_k}).    
    \end{align}
    Let $\lambda = \frac{\alpha N}{W} > 0$, with $\alpha = \min_{p,s,q} \{w_q \sigma_{p,q}, w_q \sigma_{s,q}\}$, be the convergence rate. Since for every pair $(p, s)$ we have $(\dot{x}_p - \dot{x}_s) \leq -\lambda V(X_{B_k})$, then for any element $\xi$ of the set-valued Lie derivative (which is a convex combination of these terms), it holds that
    \begin{align}
     \langle \xi,f_o(X_{B_k})\rangle \leq \sum_{p,s} \mu_{ps} (-\lambda V(X_{B_k})) = -\lambda V(X_{B_k}) . 
    \end{align}
    Thus, $\max \mathcal{L}_{f_o} V(X_{B_k}) \leq -\lambda V(X_{B_k})$.
\end{proof}

\begin{lemma}\label{lem_jump}
Consider the opinion dynamics of the system $\mathcal{H}$ (see \eqref{GenEquaDiff}) at a jump time $t$ such that $(t,k)\in\operatorname{dom}(X)$ and $(t,k+1)\in\operatorname{dom}(X)$. Then,
\begin{align}
V\bigl(X_{B_{k+1}}^+\bigr)-V\bigl(X_{B_k}^-\bigr)
\leq \Gamma_{X_{B_k}^-}\bigl(U_{C_t}^-\bigr),
\end{align}
where $X_{B_k}^-$ and $U_{C_t}^-$ denote the left limits at $t$, and $X_{B_{k+1}}^+$ denotes the right limit at $t$.
\end{lemma}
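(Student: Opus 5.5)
The plan is to split according to the two branches of the jump map \eqref{Defjump}. In each branch I describe the opinion coordinates of $X_{B_{k+1}}^+$ explicitly from Definition~\ref{realop}, show that they lie in the interval
\[
[\min\{\underline{x},\underline{u},\underline{x}+\underline{u}\},\ \max\{\overline{x},\overline{u},\overline{x}+\overline{u}\}],
\]
and then read off the bound. Throughout, write $\overline{x},\underline{x},\overline{u},\underline{u}$ as in Definition~\ref{expansionmagnitude}, evaluated at $X_{B_k}^-$ and $U_{C_t}^-$. Recall that $V(X_{B_k}^-)=\operatorname{diam}(H_X)=\overline{x}-\underline{x}$.

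\textbf{Case $J_{\nearrow}$ (union).} By Definition~\ref{realop}, $\mathbf{x}^+=\mathbf{x}_{B}+_\cup\mathbf{u}_C$ has coordinates indexed by $I(B\cup C)$, which fall into three groups.
\begin{itemize}
\item For $p\in I(B)\setminus I(C)$ we have $x_p^+=x_p\in H_X$.
\item For $p\in I(C)\setminus I(B)$ we have $x_p^+=u_p\in H_U$.
\item For $p\in I(B)\cap I(C)$ we have $x_p^+=x_p+u_p\in H_X+H_U$, which is the interval $[\underline{x}+\underline{u},\overline{x}+\overline{u}]$.
\end{itemize}
Hence every opinion after the jump lies in $H_X\cup H_U\cup(H_X+H_U)$. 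Since $V$ is the diameter of the finite set of coordinates, $V(X_{B_{k+1}}^+)$ is at most the diameter of this union, which Definition~\ref{expansionmagnitude} gives as $\max\{\overline{x},\overline{u},\overline{x}+\overline{u}\}-\min\{\underline{x},\underline{u},\underline{x}+\underline{u}\}$. Subtracting $\operatorname{diam}(H_X)$ yields the claim.

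If $I(B)\cap I(C)=\varnothing$, the Minkowski term is empty and only $H_X\cup H_U$ matters. The formula must then be read with the $\overline{x}+\overline{u}$ and $\underline{x}+\underline{u}$ entries dropped. I would make this convention explicit, but either way the bound holds, because dropping entries from the max/min can only decrease the diameter of the hull that contains the new opinions.

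\textbf{Case $J_{\searrow}$ (intersection).} Here $C\subset B$, so $I(B\cap C)=I(C)$ and $x_p^+=x_p+u_p$ for $p\in I(C)$. These values lie in $H_X+H_U\subseteq H_X\cup H_U\cup(H_X+H_U)$, so the same diameter bound applies directly. The adjacency and weight components play no role, since $V$ depends only on $\mathbf{x}$.

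\textbf{Main obstacle.} There is a subtlety to check: $\Gamma$ must be nonnegative so that the inequality is meaningful. This holds because $H_X$ is contained in the union, which makes $\Gamma\ge 0$. The harder point is to be careful that the union of intervals may not be an interval. However, its diameter as defined (max minus min) equals the diameter of its convex hull, and that is exactly what bounds $V$. So the bound is valid without any connectedness of the union.
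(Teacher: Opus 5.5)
Your proposal is correct and follows the same route as the paper. Both split the post-jump indices into $I(B)\setminus I(C)$, $I(C)\setminus I(B)$ and $I(B)\cap I(C)$, place the resulting opinions in $H_X\cup H_U\cup(H_X+H_U)$, and bound $V$ by the diameter of that union; the only difference is that you treat the $+_\cap$ branch and the empty-overlap convention explicitly, whereas the paper covers them implicitly through its worst-case $+_\cup$ computation.
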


\begin{proof}
Let \(Z=X_{B_{k+1}}^+=j\bigl(X_{B_k}^-,U_{C_t}^-\bigr)\), where \(j\) is defined in \eqref{Defjump}. For notational convenience, let \(I_X=I(B_k)\) and \(I_U=I(C_t)\). The worst-case expansion induced by \(+_{\cup}\) gives
\[
\overline{z}
=
\max\left\{
\max_{i\in I_X\setminus I_U}x_i,\,
\max_{i\in I_U\setminus I_X}u_i,\,
\max_{i\in I_X\cap I_U}(x_i+u_i)
\right\},
\]
and
\[
\underline{z} = \min\left\{
\min_{i\in I_X\setminus I_U}x_i,\,
\min_{i\in I_U\setminus I_X}u_i,\,
\min_{i\in I_X\cap I_U}(x_i+u_i)
\right\}.
\]
Hence,
\[
V\bigl(X_{B_{k+1}}^+\bigr)
=
\overline{z}-\underline{z}
\leq
\operatorname{diam}\bigl(H_X\cup H_U\cup(H_X+H_U)\bigr),
\]
where \(H_X\) and \(H_U\) denote the convex hulls of the opinions in \(X_{B_k}^-\) and \(U_{C_t}^-\), respectively, and \(H_X+H_U\) denotes their Minkowski sum. Since $V\bigl(X_{B_k}^-\bigr) = \overline{x}-\underline{x} = \operatorname{diam}(H_X)$, it follows that
\[
V\bigl(X_{B_{k+1}}^+\bigr)
-
V\bigl(X_{B_k}^-\bigr)
\leq
\Gamma_{X_{B_k}^-}\bigl(U_{C_t}^-\bigr).
\]
\end{proof}

The proof of Theorem~\ref{ISS} is the combination of Lemma~\ref{lem_flow} and Lemma~\ref{lem_jump}.  Theorem~\ref{ISS} relies on a strong connectivity assumption on the continuous dynamics, which is relatively restrictive. This assumption could, however, be relaxed using additional tools. In particular, \cite[Thm.~2]{moreau2004stability} could be used for this purpose.


\section{\label{ControlRes}Consensus Control through Agent Injections}

\noindent In this section, we build on Theorem~\ref{ISS} to establish a first control result aimed at achieving a consensus $x^*$ through agent injections. We restrict our study to the following subset:
\[
\mathcal{S}:=\left\{X_B \!\in\!\mathcal{D}_{\mathbb G}\mid
\forall p,q \!\in\! I(B),\ x_p\geq0,\ w_p\geq0,\ a_{p,q}\!=\!1\right\}.
\]

\begin{assum}\label{assumption:control}
We assume that, for \(X_{B_k}=(\mathbf{x}_{B_k},\mathbf{w}_{B_k},\mathbf{a}_{D_k})\) evolving according to \eqref{GenEquaDiff}-\eqref{jumpset},
\begin{enumerate}
    \item \(w_p(t,k)=c_p\in[\underline{w},\overline{w}]\) \(\forall  p\in I(B_k)\), with $0 < \underline{w} \le \overline{w} < \infty$;
    
    \item the interaction function satisfies \(\sigma(r)=1\), \(\forall r\in\mathbb{R}\);
    
    \item the jump map \(j\) in \eqref{Defjump} is restricted to its \(J_{\nearrow}\) component: \(X_{B_{k+1}}^+=X_{B_k}^-+_{\cup}U_{C_t}^-\);
    
    \item the incoming agents introduced through \(U_{C_t}=\bigl(\mathbf{u}_{C_t},\boldsymbol{\omega}_{C_t},\mathbf{e}_{L_t}\bigr)\) do not perturb the existing agents, i.e., \(u_p=\omega_p=0\) for all \(p\in I(B_k)\cap I(C_t)\).
\end{enumerate}
\end{assum}

 Define the weighted-average functional $M:\mathcal{S}\to\mathbb{R}$ such that 
\begin{equation}
\label{averagefunc}
    M(X_B):=
\frac{\sum_{p\in I(B)}w_px_p}{\sum_{p\in I(B)}w_p}
=
\frac{\langle\mathbf w_B,\mathbf x_B\rangle_{\mathcal V(B)}}
{\langle\mathbf w_B,\mathbf 1_B\rangle_{\mathcal V(B)}}.
\end{equation}
\begin{lemma}
\label{moyenneassum}
Let \(X_B,U_C\in\mathcal S\). Under Assumption~\ref{assumption:control}-(4),
\[
 M(X_B+_{\cup}U_C)
=
\frac{W_X M(X_B)+W_U  M(U_C)}{W_X+W_U},
\]
where \(W_X:=\sum_{p\in I(B)}w_p\) and \(W_U:=\sum_{p\in I(C)}\omega_p\).
\end{lemma}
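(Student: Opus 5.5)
The plan is to expand $M(X_B+_{\cup}U_C)$ directly from Definition~\ref{realop} and to split the index set $I(B\cup C)$ into three disjoint parts: $I(B)\setminus I(C)$, $I(C)\setminus I(B)$ and $I(B)\cap I(C)$. Write $Z=X_B+_{\cup}U_C=(\mathbf z,\boldsymbol\nu,\mathbf a\vee_\cup\mathbf e)$. By definition of $+_\cup$ on $\mathcal V$, each coordinate of $Z$ is obtained as follows:
\begin{itemize}
\item on $I(B)\setminus I(C)$, $z_p=x_p$ and $\nu_p=w_p$, since the missing components of $U_C$ are set to $0$;
\item on $I(C)\setminus I(B)$, $z_p=u_p$ and $\nu_p=\omega_p$;
\item on the overlap, $z_p=x_p+u_p$ and $\nu_p=w_p+\omega_p$.
\end{itemize}
The whole argument then rests on the behaviour of the overlap.

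The key step is to use Assumption~\ref{assumption:control}-(4): on $I(B)\cap I(C)$ we have $u_p=\omega_p=0$. Hence $z_p=x_p$ and $\nu_p=w_p$ there. Consequently $z_p=x_p$, $\nu_p=w_p$ for all $p\in I(B)$, and $z_p=u_p$, $\nu_p=\omega_p$ for all $p\in I(C)\setminus I(B)$.

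Next I compute the denominator. Since $\omega_p=0$ on the overlap,
\[
\sum_{p\in I(B\cup C)}\nu_p=\sum_{p\in I(B)}w_p+\sum_{p\in I(C)\setminus I(B)}\omega_p=W_X+W_U.
\]
For the numerator, both $u_p=0$ and $\omega_p=0$ on the overlap, so $\omega_pu_p=0$ there and
\[
\sum_{p\in I(B\cup C)}\nu_pz_p=\sum_{p\in I(B)}w_px_p+\sum_{p\in I(C)}\omega_pu_p.
\]
The definition \eqref{averagefunc} gives $\sum_{p\in I(B)}w_px_p=W_XM(X_B)$ and $\sum_{p\in I(C)}\omega_pu_p=W_UM(U_C)$. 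Dividing the numerator by the denominator yields the claimed formula.

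I expect the main obstacle to be well-posedness rather than the algebra. First, $M(U_C)$ requires $W_U>0$, and $M(X_B)$ requires $W_X>0$. Both are guaranteed in $\mathcal S$ with Assumption~\ref{assumption:control}-(1) as long as the new agents form a nonempty set with positive weights. I would state this as a standing nondegeneracy condition, noting that if $I(C)\setminus I(B)=\varnothing$ the formula degenerates consistently. Second, I would check that $Z\in\mathcal S$, so that $M$ is defined on it. Nonnegativity of the opinions and weights is inherited coordinatewise. The adjacency entries are $a\vee e$; for entries between old and new agents, this requires $\mathbf e_{L}$ to carry ones there, or else the full connectivity required by $\mathcal S$ can simply be imposed on the injected input. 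Since $M$ does not depend on $\mathbf a$, this last point does not affect the identity itself.
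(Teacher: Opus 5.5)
Your proposal is correct and follows the same route as the paper. The paper only asserts that the identity follows from the definition of $+_{\cup}$, Assumption~\ref{assumption:control}-(4), and the definition of $M$; your split of $I(B\cup C)$ into three parts, with the overlap terms annihilated, is that computation written out, and your well-posedness remarks ($W_X,W_U>0$ and whether $X_B+_{\cup}U_C\in\mathcal S$) are extra care that does not affect the identity.
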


\begin{proof}
The proof follows directly from the definitions of \(+_{\cup}\) in Subsection~\ref{AlgbraicStruct}, under Assumption~\ref{assumption:control}-(4), and of the weighted-average functional \(\mathcal M\) in \eqref{averagefunc}.
\end{proof}

\begin{corollary}
\label{constaverage}
Under Assumptions \ref{assumption:control}-(1) and \ref{assumption:control}-(2), 
the continuous dynamics over $[t_k,t_{k+1}]$ satisfy
\[
\frac{d}{dt}M(X_{B_k})
= \frac{1}{W}\sum_p w_p\frac{d}{dt}x_p
= 0.
\]
Therefore, $M(X_{B_k})$ is invariant over $[t_k,t_{k+1}]$.
\end{corollary}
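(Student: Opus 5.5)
The plan is to differentiate $M$ directly along the flow and show that the weighted sum of opinion velocities cancels by antisymmetry. On an interval $[t_k,t_{k+1}]$ no jump occurs, so the basis $B_k$ and the index set $I(B_k)$ are fixed. Write $N=|I(B_k)|$.

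\emph{Step 1: the weights and the total mass are constant.} By Assumption~\ref{assumption:control}-(1), $w_p(t,k)=c_p$ for every $p\in I(B_k)$. Hence $\dot w_p=0$, and the total mass $W=\sum_{l}w_l$ is also constant on the interval. The flow of $(\mathbf w,\mathbf a)$ is then trivial: the weight dynamics are frozen by assumption and $\dot a_{p,q}=0$ by \eqref{dynamique}. Moreover, with $\sigma\equiv1$ the opinion dynamics $f_o$ is linear, hence smooth. So the Filippov set-valued map $\mathcal F$ reduces to the singleton $\{f_o\}$ for the opinion component, and $x$ is a classical $C^1$ solution. This justifies differentiating $M$ pointwise. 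It also gives
\[
\frac{d}{dt}M(X_{B_k})=\frac{1}{W}\sum_p w_p\dot x_p ,
\]
because the quotient rule produces no terms involving $\dot w_p$ or $\dot W$.

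\emph{Step 2: substitute the opinion dynamics and cancel by symmetry.} Assumption~\ref{assumption:control}-(2) gives $\sigma\equiv1$. I work in $\mathcal S$, where $a_{p,q}=1$ for all $p,q$, so $\sigma_{p,q}=1$. (In general it suffices that $\mathbf a$ is symmetric.) Substituting \eqref{dynamique} yields
\[
\sum_p w_p\dot x_p=\frac1W\sum_{p,q} w_pw_q(x_q-x_p).
\]
The coefficient $w_pw_q$ is symmetric in $(p,q)$ while $x_q-x_p$ is antisymmetric. Exchanging the summation indices $p\leftrightarrow q$ therefore shows the double sum equals its own negative, so it is zero. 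Explicitly, $\sum_{p,q}w_pw_qx_q=W\sum_q w_qx_q=\sum_{p,q}w_pw_qx_p$. Hence $\frac{d}{dt}M=0$.

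\emph{Step 3: conclude invariance.} Since $t\mapsto M(X_{B_k}(t))$ is absolutely continuous (indeed $C^1$) on $[t_k,t_{k+1}]$ with zero derivative, it is constant there.

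The only delicate point is Step 1, namely making sure the Filippov regularization and the weight equation in \eqref{dynamique} do not interfere. The weight equation is not identically zero in general, so it must be overridden by Assumption~\ref{assumption:control}-(1). I would state explicitly that under that assumption the weights are treated as constant parameters, so the flow reduces to the linear consensus ODE. If one instead wanted full generality in the adjacency, symmetry $a_{p,q}=a_{q,p}$ would be needed for the cancellation. Otherwise only a weighted average with left-eigenvector weights would be conserved; restricting to $\mathcal S$ avoids this.
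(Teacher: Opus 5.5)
Your proof is correct and follows the paper's argument: with constant weights and $\sigma\equiv 1$ you substitute the opinion dynamics, and the double sum $\sum_{p,q}w_pw_q(x_q-x_p)$ vanishes by antisymmetry. Your extra remarks make explicit what the paper's one-line proof leaves implicit. These are that $\sigma_{p,q}=a_{p,q}$ requires working in $\mathcal S$ (or a symmetric $\mathbf a$), that Assumption~\ref{assumption:control}-(1) overrides the weight equation, and that the Filippov inclusion reduces to the classical linear flow; your overall factor $1/W^2$ is also the correct normalization where the paper writes $1/W$.
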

\begin{proof}
Substituting $\frac{d}{dt}x_p$ from \eqref{dynamique} and using Assumptions \ref{assumption:control}-(1) and \ref{assumption:control}-(2), $\frac{d}{dt}\mathcal{M}(X_{B_k})=\frac{1}{W}\sum_{p,q}w_pw_q(x_q-x_p)=0$ by antisymmetry in $p$ and $q$.
\end{proof}

\begin{theorem}
\label{resControl}
Consider the hybrid system $\mathcal H$ (see \eqref{GenEquaDiff}--\eqref{jumpset}) on $\mathcal S$ under Assumption \ref{assumption:control}. Let the control input sequence $\{U_k \}_{k=1}^{\infty}$, with $U_k:=U_{C_{t_k}}^- \in \mathcal S$, satisfy: (i) $ M(U_k)=x^*$ and $V(U_k)=0$ for all $k$; (ii) $\sum_{k=1}^{\infty}W_k^U=\infty$, where $W_k^U = \sum_{p\in I(C_{t_k})}\omega_p$; (iii) for every pair of successive jump times $t_k,t_{k+1}$, there exist $0<T_{\min}\leq T_{\max}$ such that $T_{\min}\leq t_{k+1}-t_k\leq T_{\max}$. Then, for every initial state $X_{B_0}(0)\in\mathcal S$,

\vspace{-4mm}

\[
\lim_{(t,k)\to\infty} M(X_{B_k}(t))=x^*,
\qquad
\lim_{(t,k)\to\infty}V(X_{B_k}(t))=0.
\]
\end{theorem}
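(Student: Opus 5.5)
We propose to track the weighted average $M$ and the diameter $V$ separately, using Lemma~\ref{moyenneassum} and Corollary~\ref{constaverage} for $M$, and Lemma~\ref{lem_flow} together with a direct jump estimate for $V$.

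\textbf{Average.} Let $W_k^X$ denote the total mass just before the $k$-th jump and set $m_k:=M(X_{B_k}^-)$. By Corollary~\ref{constaverage}, $M$ is constant on each flow interval. By Lemma~\ref{moyenneassum} and (i), with $e_k:=m_k-x^*$,
\[
e_{k+1}=\frac{W_k^X}{W_k^X+W_k^U}\,e_k,
\qquad
W_{k+1}^X=W_k^X+W_k^U,
\]
where weights do not change during flows by Assumption~\ref{assumption:control}-(1). The product telescopes, giving $e_{k+1}=W_1^X e_1/W_{k+1}^X$, where $W_{k+1}^X=W_1^X+\sum_{i\le k}W_i^U$. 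By (ii) this tends to $\infty$, so $M\to x^*$. Condition (iii) ensures $t\to\infty$ exactly when $k\to\infty$.

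\textbf{Diameter.} Lemma~\ref{lem_jump} alone is too crude, because $\Gamma$ need not vanish. I would instead use the structure of the injected agents. Since $V(U_k)=0$, all new agents carry the opinion $x^*$. By Assumption~\ref{assumption:control}-(4), agents already present are untouched. The post-jump hull is therefore $H_X\cup\{x^*\}$, which gives
\[
V^+\le \max\{V^-,\ |x^*-\overline{x}|,\ |x^*-\underline{x}|\}\le V^-+d_k,
\qquad d_k:=\operatorname{dist}(x^*,H_{X_k^-}).
\]
Since $m_k\in H_{X_k^-}$, we have $d_k\le |e_k|$. During flows, Assumption~\ref{assumption:control}-(2) gives $\sigma\equiv1$ and $a_{p,q}=1$ on $\mathcal S$, so $\alpha\ge\underline{w}$. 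Lemma~\ref{lem_flow} then yields $\max\mathcal L_{f_o}V\le-\lambda_kV$ with $\lambda_k=\alpha N_k/W_k\ge \underline{w}N_k/(\overline{w}N_k)=\underline{w}/\overline{w}=:\lambda$, uniformly in $k$. Using $T_{\min}$ from (iii), we obtain the discrete recursion
\[
V_{k+1}^-\le e^{-\lambda T_{\min}}\bigl(V_k^-+|e_k|\bigr).
\]
This is a contraction with factor $\rho<1$ driven by the input $|e_k|\to0$. The standard ISS-type argument for such sequences gives $V_k^-\to0$. Within a flow interval $V$ is nonincreasing, and the jump adds at most $|e_k|\to0$, so $V(X_{B_k}(t))\to0$ along the whole hybrid arc.

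\textbf{Main obstacle.} The delicate step is the jump estimate replacing $\Gamma$. One must check carefully that the $+_\cup$ operation under Assumption~\ref{assumption:control}-(4) really produces the hull $H_X\cup H_U$ with no Minkowski-sum term. The natural worry is that coordinates with $u_p=0$ on shared labels could enlarge the hull. I would verify directly from Definition~\ref{realop} that such coordinates give $x_p+0=x_p$, so they add nothing.

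A second point is making $\lambda$ uniform even though $N_k$ grows. The bound $W\le \overline{w}N$ handles this. I would also confirm that Lemma~\ref{lem_flow}'s bound is valid in the complete-graph case, or alternatively recompute $\dot x_p-\dot x_s=-(x_p-x_s)$ directly. With $\sigma\equiv1$ and fixed weights the system is linear, each $x_p\to M$ at rate $1$, and this gives $\lambda=1$.
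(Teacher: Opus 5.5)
Your proposal is correct and follows the paper's proof essentially step for step. It uses the same telescoping recursion $e_{k+1}=\frac{W_k^X}{W_k^X+W_k^U}e_k$ driven to zero by (ii), the same jump bound $V^+\le V^-+\operatorname{dist}(x^*,H_{X_k})\le V^-+|M_k-x^*|$, the same uniform flow rate $\underline{w}/\overline{w}$ from Lemma~\ref{lem_flow} combined with $T_{\min}$, and the same contraction-with-vanishing-input (discrete Gronwall) conclusion followed by flow monotonicity. One minor point: Lemma~\ref{lem_jump} is not actually too crude here, because under (i) and Assumption~\ref{assumption:control}-(4) the quantity $\Gamma_{X_k}(U_k)$ reduces exactly to $\operatorname{dist}(x^*,H_{X_k})$, which is how the paper invokes it, so your direct hull computation reproduces that lemma rather than replacing it.
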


\begin{proof}
Let us begin by fixing the notation. Let $t_k$, with $k\geq 1$, be the jump times such that $(t_k,k),(t_k,k+1)\in\operatorname{dom}(X)$. 
We set $X_k := X^-_{B_k} \in \mathcal S$, with $X_0 := X_{B_0}(0)$ for the initial flow. 
Define $M_k:= M(X_k)$, $V_k:=V(X_k)$, and $W_k:=\sum_{I(B_k)}w_p$.
Assumption~\ref{assumption:control} holds. 
The hybrid dynamics induce the following discrete recursion for $k\ge 1$: (a) jump at $(t_k,k)$: $X^+_{B_{k+1}} = j(X_k, U_k) = X_k +_\cup U_k$ (see Eq.~\eqref{Defjump}); (b) flow from $(t_k,k+1)$: $X^+_{B_{k+1}} \to X_{k+1}$ on basis $B_{k+1}$ over $[t_{k}, t_{k+1})$.

 Let us show that $M_k\to x^*$. Corollary~\ref{constaverage} gives that $M$ is constant along flows, which implies $M(X^+_{B_{k+1}}) = M_{k+1}$.
Lemma~\ref{moyenneassum} applied to the jump yields:
$M_{k+1} = \frac{W_k M_k + W^U_k x^*}{W_k + W^U_k}$. Let $\varepsilon_{k+1} = M_{k+1}-x^*$. Then
$\varepsilon_{k+1} = \frac{W_k}{W_k+W^U_k}(M_k-x^*)$. Thus, we have $\varepsilon_{k+1} = \frac{W_k}{W_k+W^U_k} \varepsilon_k$. By induction, we obtain $\varepsilon_{k+1} = \frac{W_0}{W_k+W^U_k} \varepsilon_0$. Since $\sum_{k=1}^\infty W^U_k=\infty$ by Assumption (ii) of Theorem~\ref{resControl}, it follows that $M_k\to x^*$.

 Let us show that $V_k\to0$. We first examine the expansion at the jump time $t_k$. By Assumption (i) of Theorem~\ref{resControl}, we have $V(U_k)=0$, which means that one or more agents are added with opinion $x^*$. Assumption~\ref{assumption:control}-(4) gives $H_{X^+_{B_{k+1}}}=H_{X_k}\cup\{x^*\}$. Thus, Lemma~\ref{lem_jump} gives:
$V(X^+_{B_{k+1}}) - V_k \le \operatorname{dist}(x^*,H_{X_k}) \le |x^*-M_k|$ since $M_k\in H_{X_k}$. We now consider the contraction over $[t_{k},t_{k+1})$. Lemma~\ref{lem_flow} gives $\max L_{f_o}V\le -\eta V$, and together with assumptions \ref{assumption:control}-(1) and \ref{assumption:control}-(2), we can obtain a uniform $\eta=\underline{w}/\bar{w}>0$. By Assumption (iii) of Theorem~\ref{resControl}, we obtain
$V_{k+1} \le V(X^+_{B_{k+1}}) e^{-\eta (t_{k+1}-t_{k})} \le \rho V(X^+_{B_{k+1}})$, where $\rho:=e^{-\eta T_{\min}}<1$. Combining the recurrences, we obtain
$V_{k+1} \le \rho (V_k + |x^*-M_k|) \le \rho V_k + |x^*-M_k|$.
As $M_k\to x^*$ and $\rho<1$, it follows that $V_k\to0$ by the discrete Gronwall inequality. Since $V(X_{B_{k+1}}(t)) \leq V(X_{B_{k+1}}^+)$ for $t \in [t_k,t_{k+1})$ (flow monocity), we obtain $\lim_{(t,k)\to \infty}V(X_{B_k}(t)) = 0$
\end{proof}

\section{Numerical Simulation}
\label{NumericalTest}
\noindent In this section, two applications\footnote{The C++ code used for the simulations is available on the GitLab repository: \scriptsize \texttt{\url{https://gitlab.lis-lab.fr/arthur.doliveira/dynsyst_vbs_code}}.} of the model $\mathcal{H}$, defined by \eqref{GenEquaDiff}–\eqref{jumpset}, are considered in order to precisely study consensus under a full connectivity assumption on the social system, with $\sigma(r) = 1$. The first one is an implementation of the model simulating consensus formation in a time-varying social structure, through the injection and deletion of agents with dynamic weights. The second one addresses the control of consensus by injecting, at regular time intervals, agents sharing the same opinion.

Regarding the first one, reported in Fig.~\ref{FigStructVar}, the initial state of the social system $X_{B_0}(0)$ is supported by the basis $B_0 = \{\mathbf{b}_p\}_{p\in\{1,2,7,8\}}$, where the opinions $x_p$ and the weights $w_p$ lie in $[0,1]$ for all $p \in I(B_0)$. The system is assumed to be fully connected, with $a_{p,q} = 1$ for all $(p,q) \in I(B_0)^2$. For the case of a fixed social structure, we can refer to the result reported in \cite[Fig.~5]{mcquade2019social}. Our results correspond to the case where the social structure varies through the addition and deletion of agents. We observe that the injection of agent 3 at time $t = 0.5$ does not affect the opinion dynamics, as the agent is injected within the convex hull of the existing opinions, but it does affect the weight dynamics. However, adding agents 4 and 5 at $t = 7.5$, and agent 6 at $t = 19$, does affect the consensus. Once consensus is achieved, deleting agent 6 at time $t = 28$ does not alter the evolution of the system. This behavior is also consistent with the evolution of the Lyapunov function plotted in Fig.~\ref{FigStructVar}.

Regarding control, the result is presented in Fig.~\ref{fig:4}, under the assumption that the weights are constant. We consider a sequence of control inputs $\{U_k\}_{k=1}^{100}\subset\mathcal{S}$, indexed at each discrete time $k$, where each $U_k$ contains one agent. All added agents are assumed to have the same opinion $x^{*}=0.8$ and a weight equal to $0.99$. The initial state of the social system $X_{B_0}(0) \in \mathcal{S}$ is supported by the basis $B_0=\{\mathbf{b}_p\}_{p=1:15}$. The agents are added once consensus has been reached, here around $0.3$, at $t\simeq8$. The objective is thus to shift this consensus toward the value $x^{*}=0.8$. The obtained result is consistent with Theorem~\ref{resControl}; in particular, the mean opinion converges toward the initial opinion $x^{*}$ of the added agents.

 \begin{figure}
    \centering
    \begin{subfigure}[b]{0.5\textwidth}
        \centering
        \includegraphics[width=0.88\textwidth]{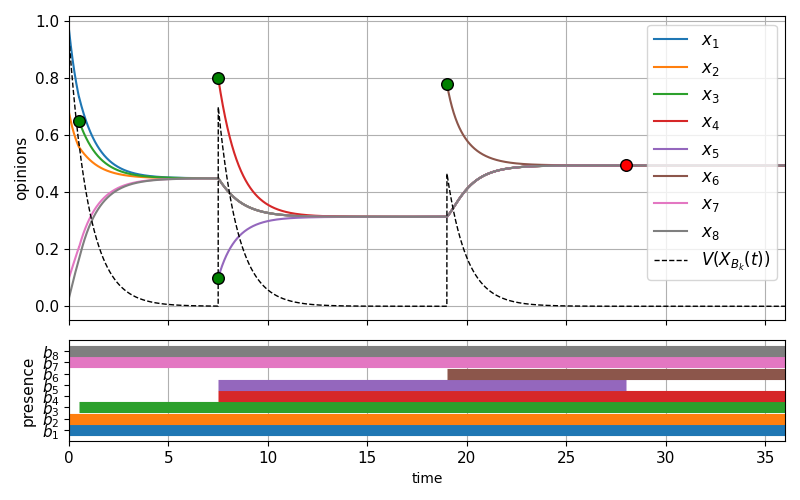}
        \caption{}
        \label{fig:sub3a}
    \end{subfigure}
    \begin{subfigure}[b]{0.5\textwidth}
        \centering
        \includegraphics[width=0.88\textwidth]{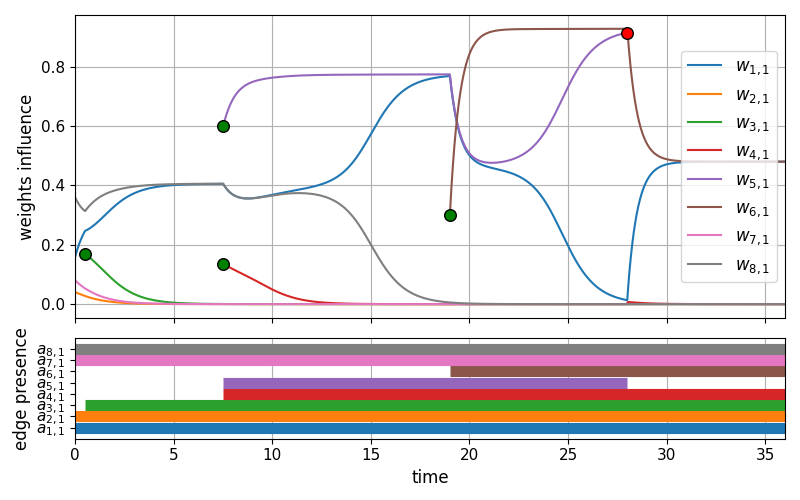}
        \caption{}
        \label{fig:sub3b}
    \end{subfigure}
    \caption{\label{FigStructVar}Simulation of the consensus process in a variable social structure: (a) opinions; (b) influence weights, with $w_{p,q} = w_p$ for all $q = 1,\dots,8$. }
\end{figure}

\begin{figure}[htbp]
    \centering
    \includegraphics[width=0.49\textwidth]{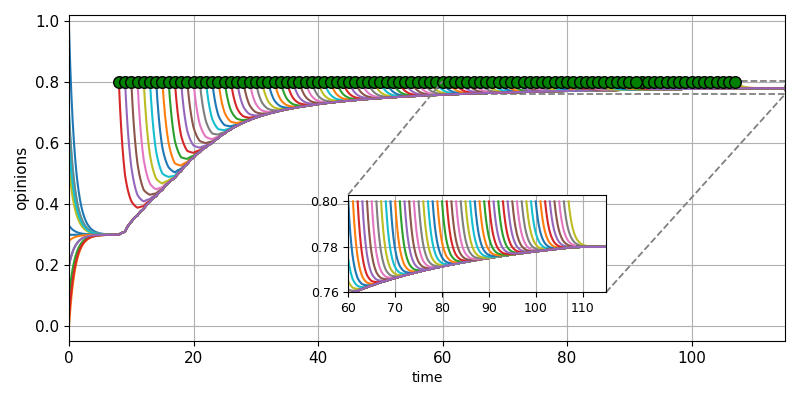}
    \caption{Consensus-based control by agent injection with the same opinion at regular time intervals, $t_{k+1}-t_k=1$.}
    \label{fig:4}
\end{figure}

\section{Conclusion}
\label{Conc}
\noindent This paper investigates opinion dynamics in a time-varying social structure characterized by the addition and deletion of agents, within the graph state space. We focus on consensus under a full connectivity assumption, considering three aspects: modeling, input-to-output stability, and consensus control through agent injection. The use of the graph state space makes it possible to incorporate, into the state trajectory of a social system, both the dynamics of opinions and the evolution of its social structure. We establish a result guaranteeing system stability through input-to-output stability, showing that the expansion of the consensus is bounded by the maximal deviation of incoming agents from the existing convex hull. In addition, we establish a consensus control result through agent injection in the case of constant weights. Future work will focus on relaxing the full connectivity assumption and developing optimal control strategies aimed at minimizing the number of injected agents. We will also investigate consensus control in the case of dynamic weights.

\bibliography{reference}
\bibliographystyle{IEEEtran}

\end{document}